\newcommand{\R}{\mathbb R}
\newcommand{\N}{\mathbb N}
\newcommand{\Hyp}{\mathbb H}
\newcommand{\Sp}{\mathbb S}
\newcommand{\ot}{\Omega(t)}
\newcommand{\opt}{\Omega^+(t)}
\newcommand{\ld}{\lambda_2}
\newcommand{\dd}{\textup{d}}
\newtheorem{theorem}{Theorem}[section]
\newtheorem{lemma}[theorem]{Lemma}
\newtheorem{proposition}{Proposition}
\newtheorem{remark}{Remark}
\title[Extremal second Dirichlet eigenvalue ]{Where to place a spherical obstacle so as to maximize the second Dirichlet eigenvalue }
\author[Ahmad El Soufi and Rola Kiwan]{}
\keywords{ Dirichlet Laplacian, eigenvalues, extremal eigenvalue, obstacle, spherical shell.}
\subjclass[2000]{35P15, 49R50, 58J50}
\email{elsoufi@univ-tours.fr}
\email{RolaKiwan@uaeu.ac.ae}
\begin{document}
\maketitle

\centerline{\scshape Ahmad El Soufi }
\medskip
{\footnotesize
  \centerline{Laboratoire de Math\'ematiques et Physique Th\'eorique,
UMR CNRS 6083} 
   \centerline{Universit\'e François Rabelais de Tours, Parc de Grandmont, F-37200
Tours France}}

\medskip

\centerline{\scshape Rola Kiwan}
\medskip
{\footnotesize
  \centerline{Laboratoire de Math\'ematiques et Physique Th\'eorique,
UMR CNRS 6083} 
   \centerline{Universit\'e François Rabelais de Tours, Parc de Grandmont, F-37200
Tours France}
\smallskip
\centerline{Current address : Department of Mathematical Sciences, College of Science,}
\centerline{UAE University, P.O.B. 17551, Al-Ain, United Arab Emirates}}
\bigskip

\begin{abstract}
We prove that among all doubly connected domains of $\R^n$ bounded by two spheres of given radii, the second eigenvalue of the Dirichlet Laplacian achieves its maximum when the spheres are concentric (spherical shell). The corresponding result for the first eigenvalue has been established by Hersch \cite{H2} in dimension 2, and by  Harrell, Kr\"oger and Kurata \cite{HKK} and  Kesavan \cite{K} in any dimension.

We also prove that the same result remains valid when the ambient space $\R^n$ is replaced by the standard  sphere $\Sp^n$ or the hyperbolic space $\Hyp^n$.

\end{abstract}

\section {Introduction and statement of results}\label{1}
The \emph{Dirichlet} or \emph{fixed membrane} eigenvalue problem in a bounded domain $\Omega\subset \R^n$, i.e., 
\begin{equation}\label{dirichlet}
\left\{
 \begin{array}{rclll}
\Delta u &  =   & -\lambda\ u &\text{in} &\Omega\\
       u& = & 0                &\text{on} & \partial\Omega,
\end{array}
\right.
\end{equation}
admits a purely discrete spectrum
$$\lambda_1(\Omega)  <  \lambda_2(\Omega)  \le \lambda_3(\Omega) \le 
 \cdots \le \lambda_i(\Omega) \le \cdots\rightarrow \infty ,$$
where each eigenvalue is repeated according to its multiplicity.

Eigenvalue optimization problems date from Lord Rayleigh's ``Theory of Sound'' (1894)  where it was suggested that the disk should minimize
the first eigenvalue $\lambda_1$ among all the domains of given measure. Rayleigh's conjecture has been proved in the 1920's independently by Faber \cite{F} and Krahn \cite{Kr}.
The topic became since a very active research field and several eigenvalue optimization results have been obtained under various constraints. For details and a literature review, we refer to the classical books of P\'olya and Szegö \cite {PS} and Bandle \cite{B}, and the review articles by Payne \cite{P}, Ashbaugh \cite{A1,A2} and Henrot \cite{He}.

The case of multi-connected planar domains, i.e. whose boundary admits more than one component, was first considered by Hersch. Using the method of interior parallels,  he proved in  \cite{H2} the following extremal property of annular membranes:\\ ``\emph{A doubly connected fixed membrane, bounded by two circles of given radii, has maximum $\lambda_1$ when the circles are concentric}''.

Here the  small  disk (i.e. the hole) may represent an obstacle to vibration and the problem answered by Hersch may be understood as a particular case of the following optimal placement problem :   given a domain $D$, we seek the optimal position to place an obstacle $B$ of fixed shape inside D in order to maximize or minimize the eigenvalue $\lambda_k$ of the Dirichlet Laplacian on $\Omega=D\setminus B$.

Hersch's result has been extended to any dimension by Harrell, Kr\"oger and Kurata \cite{HKK} and  Kesavan \cite{K}. These authors also proved that $\lambda_1$ decreases when the center of
the small ball (the hole) moves away from the center of the large ball.  Their proofs are based on a technique of domain reflection. As shown in \cite{HKK}, this method allows extensions of  the result to domains $D$ satisfying an ``interior symmetry property''.  
In a recent paper \cite{EK1}, we investigated a problem of placement under a dihedral symmetry assumption on both the domain $D$ and the obstacle $B$. We proved that extremal configurations for $\lambda_1$ correspond to the cases where the axes of symmetry of $B$ coincide with those of $D$.

%The present work deals with the optimal placement problem but for the \emph{second eigenvalue}.

The main aim of this paper is to establish a Hersch's type extremal property for spherical shells, but with respect to the \emph{second eigenvalue}. Given two positive numbers $R_1>R_0$ and a point $C\in \R^n$, $|C|<R_1-R_0$,  we denote by $\Omega(C)$ the domain of $\R^n$ obtained by removing the ball of radius $R_0$ centered at $C$ from within the ball of radius $R_1$ centered at the origin. 

\begin{theorem}\label{main}
Among all doubly connected domains of $\R^n$ bounded by two spheres of given radii, the spherical shell (concentric spheres) has the largest second Dirichlet eigenvalue. That is,
$$\lambda_2(\Omega(C))\le\lambda_2(\Omega(O)),$$
where the equality holds if and only if $C=O$. 
\end{theorem}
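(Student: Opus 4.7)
My strategy is to reduce the second-eigenvalue statement to a first-eigenvalue statement on half-domains, then invoke (an adaptation of) the Harrell--Kr\"oger--Kurata/Kesavan reflection argument.

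\emph{Step 1 (reduction via a planar symmetry).} After a rotation I may assume $C=(c,0,\dots,0)$ with $c\ge 0$. Then $\Omega(C)$ is invariant under the reflection $\sigma:(x_1,x_2,\dots,x_n)\mapsto(x_1,-x_2,\dots,x_n)$, so its Dirichlet eigenspaces split into $\sigma$-even and $\sigma$-odd parts. The first eigenfunction is positive, hence $\sigma$-even, so $\lambda_2(\Omega(C))$ is majorised by the first $\sigma$-odd Dirichlet eigenvalue, which equals the first Dirichlet eigenvalue of the half-domain $\Omega^+(C):=\Omega(C)\cap\{x_2>0\}$ with Dirichlet data on all of its boundary, including the flat disc on $\{x_2=0\}$. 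Thus
\[
\lambda_2(\Omega(C))\le\lambda_1(\Omega^+(C)).
\]

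\emph{Step 2 (identification of $\lambda_2$ on the shell).} For the shell $\Omega(O)$, separation of variables in spherical coordinates gives eigenfunctions of the form $f_\ell(|x|)Y_\ell(x/|x|)$. The second eigenvalue is the first $\ell=1$ eigenvalue (a standard fact about spherical shells which I will record), with eigenspace spanned by the $n$ functions $g(|x|)\,x_i/|x|$, $i=1,\dots,n$. The choice $i=2$ yields a $\sigma$-odd eigenfunction that is positive on $\{x_2>0\}$, hence it is the first Dirichlet eigenfunction of the half-shell $\Omega^+(O):=\Omega(O)\cap\{x_2>0\}$. Consequently $\lambda_2(\Omega(O))=\lambda_1(\Omega^+(O))$, and combining with Step~1 reduces the theorem to
\[
\lambda_1(\Omega^+(C))\le\lambda_1(\Omega^+(O)),
\]
with equality iff $C=O$.

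\emph{Step 3 (the main inequality via reflection).} The domains $\Omega^+(C)$ share the same outer boundary --- the upper open hemisphere of $\partial B(0,R_1)$ together with the flat equatorial disc --- while their half-ball obstacles have centres sliding along the equatorial diameter. I plan to prove the inequality by transplanting the reflection technique of \cite{HKK,K} to the half-ball setting. Let $\psi>0$ be the principal eigenfunction of $\Omega^+(C)$ and let $\tau$ be the reflection across the perpendicular bisector $\{x_1=c/2\}$ of $OC$; the pullback $\psi^*:=\psi\circ\tau$ is a principal eigenfunction on $\tau(\Omega^+(C))$. Combining $\psi$ and $\psi^*$ on the symmetric overlap $\Omega^+(C)\cap\tau(\Omega^+(C))$ via the $\min$/$\max$ procedure of Kesavan, I expect to produce a competitor $\widetilde\psi\in H^1_0(\Omega^+(O))$ with Rayleigh quotient at most $\lambda_1(\Omega^+(C))$, establishing the inequality. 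The strict inequality for $C\ne O$ will follow by unique continuation applied to the equality case in the $\min$/$\max$ step.

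\emph{Main obstacle and extensions.} The principal difficulty is that, unlike the classical HKK/Kesavan situation, the outer half-ball is \emph{not} invariant under the reflection $\tau$ (it is invariant only under reflections through hyperplanes passing through $O$). One therefore has to control carefully the ``excess'' regions where $\tau(\Omega^+(C))$ overshoots $\Omega^+(O)$ and show that their energy contributions can be absorbed. A potentially cleaner alternative is to run the HKK/Kesavan argument directly on the full domains $\Omega(C)$ and $\Omega(O)$ while remaining inside the $\sigma$-odd subspace; the key check is that the reflection/rearrangement preserves $\sigma$-oddness. The same scheme should transfer with minor modifications to $\Sp^n$ and $\Hyp^n$, with geodesic spheres playing the role of Euclidean ones and totally-geodesic hyperplanes serving as reflection mirrors.
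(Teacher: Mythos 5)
Your Steps 1 and 2 reproduce the paper's reduction exactly: the even/odd splitting, the bound $\lambda_2(\Omega(C))\le\lambda_1^{-}(\Omega(C))=\lambda_1(\Omega^+(C))$, and the identification $\lambda_2(\Omega(O))=\lambda_1(\Omega^+(O))$ are equation (\ref{inf}), Lemma \ref{simple} and Lemma \ref{oo}. One small caveat: the ``standard fact'' you defer in Step 2 (that the first $\ell=1$ mode lies below the second radial mode, so that $\lambda_2$ of the shell is $\mu_1(1)$ and not $\mu_2(0)$) does require an argument; the paper obtains it by differentiating the radial problem $(P_0)$ and invoking Sturm--Liouville eigenvalue monotonicity.

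The genuine gap is in Step 3, and it is not the one you flag. A competitor $\widetilde\psi\in H^1_0(\Omega^+(O))$ with Rayleigh quotient at most $\lambda_1(\Omega^+(C))$ would prove, by the variational characterization, that $\lambda_1(\Omega^+(O))\le\lambda_1(\Omega^+(C))$ --- the \emph{reverse} of the inequality $\lambda_1(\Omega^+(C))\le\lambda_1(\Omega^+(O))$ you correctly identified as the remaining goal. Since that goal is in fact a strict inequality for $C\ne O$, no such competitor exists, so the construction cannot be repaired by controlling ``excess regions'': it is aimed the wrong way. Concretely, the $\min/\max$ bookkeeping does not close either, because $\max(\psi,\psi^*)$ lives on $\Omega^+(C)\cup\tau(\Omega^+(C))$, which is contained in neither half-domain, so the energy-splitting identity yields no usable Rayleigh-quotient bound. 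What the paper does instead (Proposition \ref{l1}) is prove that $t\mapsto\lambda_1(\Omega^+(t))$ is strictly decreasing via the Hadamard shape-derivative formula: the derivative reduces to $\frac1{R_0}\int_{\Gamma_0^+(t)}|\partial u/\partial\eta_t|^2(x_1-t)\,d\sigma$, and its sign is obtained by comparing $|\partial u/\partial\eta_t|$ at points reflected across the hyperplane $\{x_1=t\}$ \emph{through the center of the moving obstacle} (not the perpendicular bisector of $OC$), using the maximum principle and Hopf's lemma applied to $w(x)=u(x)-u(x^*)$ on $\Omega^+(t)\cap\{x_1>t\}$, whose reflection is a proper subset of $\Omega^+(t)$. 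This derivative form of the Harrell--Kr\"oger--Kurata/Kesavan reflection argument is the one to carry out; it also gives strictness and the equality case directly, with no unique-continuation step needed.
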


Notice that the optimization results mentioned above concerning $\lambda_1$ rely on the positivity of the first eigenfunction and the Hadamard variation formula of $\lambda_1$ with respect to domain deformations. These two ingredients are of course no more available  as soon as we deal with a higher order eigenvalue (see \cite{EI1} for an approach to evaluate the first variation of an eigenvalue with non-trivial multiplicity).  However, noticing that the domain $\Omega(C)$ admits  hyperplanes of symmetry, we may consider the spectrum $\{\lambda_i^-(\Omega(C))\}_{i\geq 1}$ corresponding to eigenfunctions which are anti-invariant by the reflection with respect to such a hyperplane of symmetry. We observe that the first anti-invariant eigenvalue $\lambda_1^-(\Omega(C))$ is simple (Lemma \ref{simple}) and show that it decreases as $C$ moves away from the origin (Proposition \ref{l1}). The result then follows from the inequality
$\lambda_2(\Omega(C))\le\lambda_1^-(\Omega(C))$ and the fact that the equality $\lambda_2(\Omega(O))=\lambda_1^-(\Omega(O))$ holds for a spherical shell (Lemma \ref{oo}).

All our arguments can be transposed in a more general setting. Indeed, let $\Sp^n$ and $\mathbb{H}^n$ be the standard sphere and the hyperbolic space, respectively. We consider domains obtained by removing a geodesic ball $B_0$ from a geodesic ball $B_1$ such that $\bar B_0\subset B_1 $, and the eigenvalue problem \ref{dirichlet} associated with the Laplace-Beltrami operator on $ B_1 \setminus \bar B_0$. We obtain the following

\begin{theorem}\label{space forms}
Among all doubly connected domains of $\Sp^n$ (resp. $\mathbb{H}^n$) of the form $ B_1 \setminus \bar B_0$, where $B_0$ and $B_1$ are geodesic balls of fixed radii such that $\bar B_0\subset B_1 $, the second Dirichlet eigenvalue achieves its maximal value uniquely when the balls are concentric. 
\end{theorem}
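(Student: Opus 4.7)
The plan is to transpose the proof of Theorem \ref{main} to the constant-curvature setting, using exactly the same three-step architecture: exploit the hyperplane symmetries of $\Omega(C):=B_1\setminus \bar B_0(C)$, control the first anti-symmetric Dirichlet eigenvalue $\lambda_1^-$, and close the argument via $\lambda_2\le\lambda_1^-$ together with equality at the concentric configuration. The geometric fact enabling this transposition is that $\Sp^n$ and $\Hyp^n$ both carry a rich family of totally geodesic hypersurfaces together with isometric reflections across them, under which geodesic balls map to geodesic balls of equal radius. Thus, letting $O$ denote the center of $B_1$ and $C$ that of $B_0$, the domain $\Omega(C)$ is invariant under reflection across any totally geodesic hypersurface containing the geodesic arc $OC$, and the anti-symmetric spectrum $\{\lambda_i^-(\Omega(C))\}_{i\geq 1}$ is well-defined.

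Concretely, I would verify in order the three ingredients used in the Euclidean proof. First, the simplicity of $\lambda_1^-(\Omega(C))$ (Lemma \ref{simple}) transports verbatim: any anti-symmetric eigenfunction restricts to a first Dirichlet eigenfunction on each of the two half-domains cut out by the symmetry hypersurface, hence is of one sign there, which forces one-dimensionality of the corresponding eigenspace. Second, the monotonicity $\lambda_1^-(\Omega(C))<\lambda_1^-(\Omega(\tilde C))$ for $d(O,\tilde C)<d(O,C)$ (Proposition \ref{l1}) is obtained by taking $\Pi$ to be the totally geodesic perpendicular bisector of the geodesic arc $[C,\tilde C]$, observing that the isometric reflection $\sigma_\Pi$ carries $B_0(C)$ onto $B_0(\tilde C)$, and manufacturing a test function for $\Omega(\tilde C)$ out of a first anti-symmetric eigenfunction on $\Omega(C)$ by a reflection-and-truncation procedure. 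Third, on the concentric shell $\Omega(O)$ the full rotational symmetry forces any second Dirichlet eigenfunction to be the product of a radial factor and a first spherical harmonic, in particular anti-symmetric under an equatorial reflection, so $\lambda_2(\Omega(O))=\lambda_1^-(\Omega(O))$ (Lemma \ref{oo}). Combined with the universal inequality $\lambda_2\le\lambda_1^-$ one obtains the strict inequality $\lambda_2(\Omega(C))<\lambda_2(\Omega(O))$ for $C\neq O$.

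The main obstacle is the monotonicity step. In $\R^n$, the fact that reflection across $\Pi$ \emph{folds} the support of the eigenfunction into $\Omega(\tilde C)$ is driven by elementary convexity of Euclidean distance: the $C$-side of $\Pi$ is farther from $O$ than the $\tilde C$-side, so reflection truly moves the configuration inward. The analogous folding must now be verified in the curved geometries. For $\Hyp^n$ this is clean, since the hyperbolic distance to $O$ satisfies the corresponding reflection inequality across any totally geodesic bisector, a consequence of the CAT(0) property. For $\Sp^n$, one must moreover ensure that the geodesic segment $OC$ and its bisector $\Pi$ behave properly inside $B_1$; this requires the standing assumption $\bar B_0\subset B_1$ to prevent any of the relevant geodesic configurations from wrapping around the antipode of $O$. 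Once these model-dependent geometric facts are checked -- using the standard descriptions of totally geodesic hypersurfaces as great hyperspheres in $\Sp^n$ and as hyperbolic hyperplanes in $\Hyp^n$ -- the analytic closure of the argument (construction of the test function, Rayleigh quotient bound, strictness via unique continuation) is a formal copy of its Euclidean counterpart.
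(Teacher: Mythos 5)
Your overall architecture is the right one and matches the paper's: symmetry decomposition of the spectrum, simplicity of $\lambda_1^-$, identification $\lambda_2(\Omega(0))=\lambda_1^-(\Omega(0))$ via separation of variables, and monotonicity of $\lambda_1^-$. But the monotonicity step --- the heart of the matter --- is where your proposal breaks down, in two ways. First, the direction of your variational argument is backwards: the min-max principle only yields \emph{upper} bounds, so a test function for $\Omega(\tilde C)$ manufactured from the first anti-invariant eigenfunction of $\Omega(C)$ can only prove $\lambda_1^-(\Omega(\tilde C))\le(\text{Rayleigh quotient})\le\lambda_1^-(\Omega(C))$, which is the \emph{reverse} of the inequality $\lambda_1^-(\Omega(C))<\lambda_1^-(\Omega(\tilde C))$ you need. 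Second, even the construction itself is obstructed: the reflection $\sigma_\Pi$ across the perpendicular bisector of $[C,\tilde C]$ carries $B_0(C)$ to $B_0(\tilde C)$ but does \emph{not} preserve $B_1$ (its center $O$ does not lie on $\Pi$), so the reflected eigenfunction is not supported in $B_1$, and you give no truncation or polarization procedure whose Dirichlet energy and $L^2$ norm are controlled. Relatedly, you misdescribe the Euclidean Proposition \ref{l1}: it is not a ``folding of supports'' or a test-function comparison of two positions, but a computation of the derivative $\frac{\dd}{\dd t}\lambda_1^-(t)$ via the Hadamard variation formula, followed by a sign determination using the auxiliary function $w(x)=u(t)(x)-u(t)(x^*)$ and the Hopf maximum principle to compare normal derivatives on the two halves of $\Gamma_0^+(t)$. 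So the ``formal copy of the Euclidean counterpart'' you invoke is a copy of an argument that is not the one that proves the Euclidean case.

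What the paper actually does in the curved setting is to transpose precisely that derivative argument: the Hadamard formula is valid on Riemannian manifolds (by \cite{EI1}), the deformation field is $\varphi V$ with $V$ the Killing field generating rotations in the $(x_0,x_1)$-plane (resp.\ hyperbolic translations), the normal component $\eta_t\cdot v$ is computed explicitly on $\Gamma_0^+(t)$ as $x\cdot V(t)/\sqrt{1-(x\cdot C(t))^2}$ up to sign, and the reflection is taken across the totally geodesic hypersurface $Z_t=\{x\cdot V(t)=0\}$ \emph{through the center of the moving ball} (not the bisector of two positions), for which one checks that the image of $\Omega^+_s(t)$ is a proper subset of $\Omega^+(t)$; the maximum principle then gives the strict sign. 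Two smaller points: your appeal to ``full rotational symmetry'' for the concentric case still requires the Sturm--Liouville comparison $\mu_1(1)<\mu_2(0)$ (obtained by differentiating the radial equation, which works because $(a'/a)'=-1/a^2$ for both $a=\sin r$ and $a=\sinh r$), and your CAT(0) remark for $\Hyp^n$ does not apply to $\Sp^n$ and in any case is not the inclusion one has to verify. As written, the proposal does not constitute a proof of the monotonicity step.
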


The corresponding result for the first eigenvalue was obtained by Anisa and Aithal  \cite{AA}.

Lastly,  let us mention the somewhat related results of Shen and Shieh, concerning spherical bands, that is, domains of $\Sp^2$ of the form $B_1 \setminus \bar B_0$, where $B_0$ and $B_1$ are \emph{concentric} geodesic disks.  They show that  among all such bands of fixed area, $\lambda_1$ is maximal when the band is symmetric with respect to an equator \cite{SS}. Shieh \cite{shi} proved that this extremal property of symmetric bands remains true for the second eigenvalue, provided the area is less than $2\pi$.   
 
%Similar questions have been investigated for the second Dirichlet eigenvalue, and it was solved by Szegö (as quoted by P\'olya in \cite{pol}), and Krahn \cite{kr2}, they proved that the minimal $\ld$ is obtained for the union of two identical balls.

%Therefore there are very few results treating $\ld$, we cite here the paper of Shieh in 2003 \cite{shi}, who has proved that among all spherical bands of given area, the symmetrical band maximizes $\ld$.

 \section {Monotonicity of the first anti-invariant eigenvalue and proof of Theorem \ref{main}}\label{2}
 
Let $R_0$ and $R_1$ be two real numbers such that $R_1>R_0>0$.  In all the sequel, we will denote by $B_1$ the open ball in $\R^n$ of radius $R_1$ centered at the origin and, for all $t\in [0,R_1-R_0)$, by $B_0(t)$ the open ball in $\R^n$ of radius $ R_0$ centered at the point $(t,0,\dots, 0)$. We set $\Omega(t):=B_1\setminus \bar B_0(t)$ and denote by $$\lambda_1(t)  <  \lambda_2(t) \le \lambda_3(t)\le 
 \cdots \le \lambda_i(t)\le \cdots$$
the nondecreasing and unbounded sequence of its eigenvalues for the Laplace operator with homogeneous Dirichlet boundary condition. 
 
For symmetry reasons, we only need to prove that, for all $t\in (0,R_1-R_0)$,
$$\lambda_2(t) < \lambda_2(0).$$

The domain $\Omega(t)$ is clearly symmetric with respect to any hyperplane passing through the first coordinate axis.  
Let   $S$ denotes the reflection  with respect to the hyperplane $\{x_n=0\}$. The quadratic form domain $H=W^{1,2}_0(\ot)$ of the Dirichlet Laplace operator splits into the direct sum of two invariant subspaces 
$$H=H^+ \oplus H^-$$
with  $H^{\pm}=\{u\in H \; ; \; u\circ S =\pm u\}$. We denote by $\Delta^{\pm}$ the Laplace operators associated with the same quadratic form (that is the Dirichlet energy) restricted to $H^{\pm}$, so that we have
$$\Delta=\Delta^+ \oplus \Delta^-.$$

We denote by $\{\lambda_i^+(t)\}_{i\geq 1}$ and $\{\lambda_i^-(t)\}_{i\geq 1}$ the spectra of $\Delta^+$ and $\Delta^-$, respectively. The spectrum of $\Delta$ is then equal to the re-ordered union of $\{\lambda_i^+(t)\}_{i\geq 1}$ and $\{\lambda_i^-(t)\}_{i\geq 1}$. Since a first eigenfunction of $\Delta$ does not change sign in $\ot$, one necessarily has $\lambda_1(t)=\lambda_1^+(t)<\lambda_1^-(t)$. Thus, the second  eigenvalue is given by 
\begin{equation}\label{inf}
\ld(t)=\inf \{\lambda_1^-(t), \lambda_2^+(t)\}.
\end{equation}

In the case of a spherical shell (i.e. the case where t=0), one has the following
\begin{lemma}\label{oo}
Let $\mu$ be the first eigenvalue and $f$  the first eigenfunction (unique up to scaling) of the following Sturm-Liouville eigenvalue problem:
	\begin{displaymath}
\left\{\begin{array}{rcll}
f''(r) +\frac{n-1}{r} f'(r) -\frac{n-1}{r^2} f(r) =-\mu f(r)\\
f(R_0)=f(R_1)=0.
\end{array}\right.
\end{displaymath}
Then the set of functions $\{\frac{f(|x|)}{|x|}\ x_1, \cdots , \frac{f(|x|)}{|x|}\ x_n\}$ constitutes a basis for the second eigenspace of the spherical shell $\Omega(0)$.
In particular,  
$$\lambda_2(0)=\lambda^+_2(0)=\lambda^-_1(0)=\mu.$$

\end{lemma}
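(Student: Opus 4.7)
The plan is to diagonalize the Laplacian on $\Omega(0)$ via spherical harmonics and then identify $\lambda_2(0)$ by a Sturm-type comparison. Because the shell is $O(n)$-invariant, one may seek eigenfunctions in the separated form $u(x)=R(r)Y_\ell(x/|x|)$ with $Y_\ell$ a degree-$\ell$ spherical harmonic; this reduces the eigenvalue problem to the Sturm-Liouville equation
\[
R''(r)+\tfrac{n-1}{r}R'(r)-\tfrac{\ell(\ell+n-2)}{r^2}R(r)+\mu R(r)=0,\qquad R(R_0)=R(R_1)=0,
\]
whose $k$-th eigenvalue I denote $\mu_{\ell,k}$. The full spectrum of $\Omega(0)$ is $\{\mu_{\ell,k}\}$, each with multiplicity $\dim\mathcal{H}_\ell$, where $\mathcal{H}_\ell$ is the space of degree-$\ell$ spherical harmonics in $\R^n$. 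A direct substitution verifies that $\tfrac{f(|x|)}{|x|}x_i$ is an $\ell=1$ eigenfunction with eigenvalue $\mu=\mu_{1,1}$ (since $x_i\in\mathcal{H}_1$, writing this function as $R(r)Y_1$ with $R=f$ gives exactly the ODE of the lemma).

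Since the first eigenfunction of $\Omega(0)$ is positive and hence $O(n)$-invariant (the first eigenspace is one-dimensional and thus fixed by the compact group action), one has $\lambda_1(0)=\mu_{0,1}$. The potential $\ell(\ell+n-2)/r^2$ is strictly increasing in $\ell$, so by Rayleigh's principle applied to the radial problems $\mu_{\ell,1}$ is strictly increasing in $\ell$, and consequently $\lambda_2(0)=\min(\mu_{0,2},\mu_{1,1})$. The heart of the proof is thus the strict comparison $\mu_{1,1}<\mu_{0,2}$. To establish it, let $\phi$ be an $\ell=0$ eigenfunction with eigenvalue $\mu_{0,2}$: Sturm oscillation gives exactly one interior zero, so by Rolle's theorem $\phi'$ has at least two zeros in $(R_0,R_1)$. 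Differentiating the $\ell=0$ ODE shows that $\phi'$ itself satisfies the $\ell=1$ ODE with the \emph{same} eigenvalue $\mu_{0,2}$. If $\mu_{0,2}\le\mu_{1,1}$ were to hold, Sturm's comparison theorem applied to $\phi'$ and to a first $\ell=1$ eigenfunction $g$ (which is strictly positive on $(R_0,R_1)$) would force $g$ to vanish between the two interior zeros of $\phi'$, a contradiction.

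With $\mu_{1,1}<\mu_{0,2}$ secured, $\lambda_2(0)=\mu_{1,1}$ and its eigenspace is exactly the $n$-dimensional $\ell=1$ space, spanned by $\{\tfrac{f(|x|)}{|x|}x_i\}_{i=1}^n$. For the final identification, I would decompose this eigenspace under the reflection $S$ through $\{x_n=0\}$: the $n-1$ functions with $i<n$ are $S$-invariant while $\tfrac{f(|x|)}{|x|}x_n$ is anti-invariant. Combined with $\lambda_1^+(0)=\mu_{0,1}$ and the fact that any anti-invariant eigenfunction is built from a spherical harmonic odd in $x_n$ (hence of degree $\ge 1$), this yields $\lambda_2^+(0)=\lambda_1^-(0)=\mu$. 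I expect the Sturm-comparison step, which is the only one that crosses between different values of $\ell$, to be the main obstacle; everything else is routine bookkeeping for the separation of variables.
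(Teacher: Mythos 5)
Your proposal is correct and follows essentially the same route as the paper: separation of variables in spherical harmonics reduces the question to the strict comparison $\mu_{1,1}<\mu_{0,2}$, and both you and the authors obtain this by differentiating the radial second eigenfunction to produce a solution of the $\ell=1$ radial equation with two interior zeros. The only cosmetic differences are that the paper singles out the two candidates $\mu_{0,2}$ and $\mu_{1,1}$ via Courant's nodal domain theorem rather than via monotonicity of the potential $\ell(\ell+n-2)/r^2$ in $\ell$, and concludes from the two interior zeros by domain monotonicity of the first eigenvalue on $(r_1,r_2)\subsetneq(R_0,R_1)$ rather than by Sturm comparison.
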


Notice that $\frac{f(|x|)}{|x|}\ x_n\in H^-$ while, $\forall i\le n-1$, $\frac{f(|x|)}{|x|} \ x_i\in H^+$.

This result should likely be known, at least in dimension 2. For the sake of completeness, we give the following short proof.
\begin{proof}[Proof of Lemma \ref{oo}]
The expression of the Laplace operator with respect to polar coordinates $(r,\sigma)\in (R_0, R_1)\times \Sp^{n-1}$ is 
\begin{equation*}
\Delta = \frac{\partial^2 }{\partial r^2} +\frac {n-1}r \frac {\partial }{\partial r} +\frac 1{r^2} \Delta_{S^{n-1}},
\end{equation*}
where $\Delta_{S^{n-1}}$ is the Laplace-Beltrami operator of the standard $(n-1)$-sphere.
Using separation of
variables, one can see that any
eigenfunction is a linear combination of functions
of the form $ f_k (r)g_k(\sigma)$ 
where, $\forall k\in \N$, $g_k$ is an eigenfunction of $\Delta_{S^{n-1}}$ associated with the eigenvalue $\gamma_k=k(n+k-2)$ of $\Delta_{S^{n-1}}$, and $f_k$ is an eigenfunction of the following Sturm-Liouville eigenvalue problem:
	\begin{displaymath}
 (P_k) \; \; \; \left\{\begin{array}{rcll}
f_k''(r) +\frac{n-1}{r} f_k'(r) -\frac{\gamma_k}{r^2} f_k(r) =-\mu(k) f_k(r)\\
f(R_0)=f(R_1)=0.
\end{array}\right.
\end{displaymath}
We denote by $\mu_1(k)<\mu_2(k)\le\mu_3(k)\le \cdots $ the nondecreasing sequence of eigenvalues of the last problem. The spectrum of $\Delta$ is nothing but their re-ordered union, $\{\mu_l(k)\; ; \; k\ge 0\; ,\; l\ge 1\}$. Recall that a second 
eigenfunction admits exactly two nodal domains (Courant's nodal domain theorem). This condition is fulfilled by an eigenfunction $ f_k (r)g_k(\sigma)$ if and only if, either $k=0$ (that is $g_0$ is constant) and $f_0$ is a second eigenfunction of ($P_0$), or $k=1$ (that is $g_1$ is a linear function) and $f_1$ is a first eigenfunction of ($P_1$). In particular, $$\lambda_2(0)= \min \{\mu_2(0), \mu_1(1)\}.$$
Thus, we need to compare the first eigenvalue $\mu_1(1)$ of ($P_1$) with the second eigenvalue $\mu_2(0)$ of ($P_0$). Let $f_0$ be a second eigenfunction of ($P_0$) and let $r_0\in (R_0,R_1)$ be such that $f_0(r_0)=0$. The derivative $h:=f_0'$ of $f_0$ admits two zeros, $r_1\in (R_0,r_0)$ and  $r_2\in (r_0,R_1)$, and, differentiating ($P_0$), one can check that $h$ satisfies    
  \begin{displaymath}
  \left\{\begin{array}{rcll}
h''(r) +\frac{n-1}{r} h'(r) -\frac{\gamma_1}{r^2} h(r) =-\mu_2(0) h(r)\\
h(r_1)=h(r_2)=0.
\end{array}\right.
\end{displaymath}
Comparing with ($P_1$), the eigenvalue monotonicity principle allows us to deduce that $\mu_2(0) > \mu_1(1)$ and, then, $\lambda_2(0)=  \mu_1(1)$. The corresponding eigenfunctions are of the form $f(|x|) L(\frac{x}{|x|})$, where $f$ is a first eigenfunction of $(P_1)$ and $L$ is a linear function. 
  Among these eigenfunctions, the function  $\frac{f(|x|)}{|x|}\ x_n$ belongs to $H^-$. Hence $\lambda_2(0)\ge \lambda^-_1(0)$. Since $\frac{f(|x|)}{|x|}\ x_1$ belongs to $ H^+$ and is changing sign, we necessarily have $\lambda_2(0)\ge \lambda^+_2(0)$. Using \ref{inf}, we get $\lambda_2(0)=\lambda^+_2(0)=\lambda^-_1(0)$.
\end{proof}
\begin{remark}
In \cite{EI1}, the first author and Ilias introduced the notion of extremal domain for  the $k$-th Dirichlet eigenvalue $\lambda_k$ with respect to volume-preserving domain deformations. They showed that a necessary and sufficient condition for a domain $\Omega $ to be extremal for $\lambda_2$  is that there exists a finite family of second eigenfunctions $\{u_1,\dots,u_m\}$ satisfying $\sum_{i=1}^{m} \left({\partial u_i\over \partial \eta}\right)^2 =1$ on $ \partial \Omega $, 
where ${\eta}$ is the unit normal vector field of $\partial\Omega$ . Using the basis of second eigenfunctions given in Lemma \ref{oo}, we deduce that the spherical shell $\Omega (0)$ is an extremal domain  for $\lambda_2$ with respect to any volume-preserving domain deformation (not only those corresponding to the motion of the inner ball inside the large ball).
\end{remark}

We introduce the domain 
$$\opt :=\ot \cap \{x_n>0\}$$
whose first Dirichlet eigenvalue will be denoted $\lambda_1(\opt)$.

\begin{lemma}\label{simple}
$\forall\ t\in (0, R_1-R_0)$, $\lambda^-_1(t)$ is simple and 
$$\lambda_1^- (t) = \lambda_1(\opt).$$
\end{lemma}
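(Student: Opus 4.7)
The plan is to identify $\lambda_1^-(t)$ with the first Dirichlet eigenvalue on the "upper half" domain $\Omega^+(t)$ via the standard odd-reflection trick, and then invoke simplicity of the first Dirichlet eigenvalue on a connected domain.

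More precisely, I would first set up a natural isometry between $H^-(\Omega(t))$ and $W^{1,2}_0(\Omega^+(t))$. Given $u\in H^-$, restrict $u$ to $\Omega^+(t)$; since $u\circ S = -u$ in the trace sense along $\{x_n=0\}\cap \overline{\Omega(t)}$, the restriction $u|_{\Omega^+(t)}$ vanishes on the entire boundary of $\Omega^+(t)$ (it already vanishes on $\partial\Omega(t)\cap\{x_n\ge 0\}$), hence lies in $W^{1,2}_0(\Omega^+(t))$. Conversely, given $v\in W^{1,2}_0(\Omega^+(t))$, the odd extension
$$\tilde v(x_1,\dots,x_{n-1},x_n)=\begin{cases} v(x_1,\dots,x_n) & \text{if }x_n>0 \\ -v(x_1,\dots,-x_n) & \text{if }x_n<0\end{cases}$$
belongs to $H^-$, since the trace matching at $x_n=0$ is automatic. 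This correspondence preserves the Dirichlet energy and the $L^2$-norm (both get divided/multiplied by $2$ consistently), so by the min-max principle the spectra of $\Delta^-$ on $\Omega(t)$ and of the Dirichlet Laplacian on $\Omega^+(t)$ coincide with multiplicities. In particular $\lambda_1^-(t)=\lambda_1(\Omega^+(t))$.

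For simplicity, I would then check that $\Omega^+(t)$ is connected for every $t\in(0,R_1-R_0)$. This is where one must be a bit careful: the hyperplane $\{x_n=0\}$ contains the center $(t,0,\ldots,0)$ of $B_0(t)$ as well as the origin, and cuts both balls through their centers; consequently $\Omega^+(t)$ is a "half spherical shell" which is clearly pathwise connected (it is even homeomorphic to a half-ball with a smaller half-ball removed). Once connectedness is in hand, the simplicity of $\lambda_1(\Omega^+(t))$ is the standard fact that the first Dirichlet eigenfunction on a connected bounded open set can be chosen strictly positive, so any two first eigenfunctions must be proportional (otherwise an orthogonal combination would change sign while also being a first eigenfunction, contradicting positivity of the ground state).

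I do not expect a serious obstacle here; the only mildly delicate points are the $W^{1,2}_0$ bookkeeping in the reflection argument (making sure odd extension actually lands in $W^{1,2}_0(\Omega(t))$, which is routine since odd extension across a hyperplane preserves weak differentiability and kills the trace there) and the observation that $\Omega^+(t)$ is connected — which, while geometrically obvious from the picture, is the step that makes the argument fail without the symmetry assumption and is therefore worth stating explicitly.
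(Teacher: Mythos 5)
Your proposal is correct and follows essentially the same route as the paper: identify $\lambda_1^-(t)$ with $\lambda_1(\Omega^+(t))$ by restriction in one direction and odd reflection in the other, then get simplicity from positivity of the ground state on the connected half-shell. You spell out the $W^{1,2}_0$ bookkeeping and the connectedness/simplicity step more explicitly than the paper does, but the underlying argument is identical.
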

\begin{proof}
If $u\in H^-$ is a first eigenfunction of $\Delta^-$ on $\Omega (t)$, then $u$ vanishes on the hyperplane $\{x_n=0\}$. The restriction of $u$ to $\opt$ is an eigenfunction of the  Dirichlet Laplacian in $\opt$, which implies that
$\lambda_1(\opt)\leq \lambda_1^- (t).$
On the other hand, a first Dirichlet eigenfunction of $\opt$ can be reflected antisymmetrically with respect to the hyperplane $\{x_n=0\}$ to give an eigenfunction of $\Delta^-$ in $\ot$. Hence, $\lambda_1(\opt)\geq \lambda_1^- (t)$ and the result follows immediately. 
 \end{proof}
\begin{proposition}\label{l1}
The function $t\mapsto \lambda^-_1(t)$ is (strictly) decreasing on $(0, R_1-R_0)$. 
\end{proposition}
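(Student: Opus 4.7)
The plan is to adapt the reflection argument of Kesavan \cite{K} and Harrell--Kr\"oger--Kurata \cite{HKK} — originally used to prove monotonicity of $\lambda_1(\Omega(t))$ — to the half-domain $\Omega^+(t)$, whose first Dirichlet eigenvalue coincides with $\lambda_1^-(t)$ by Lemma \ref{simple}. Since $\Omega^+(t)$ is connected and Lipschitz, $\lambda_1^-(t)$ is simple with a positive normalized eigenfunction $u_t$, and the Hadamard variational formula applies. The only moving piece of $\partial\Omega^+(t)$ is the inner upper hemisphere $\Gamma(t):=\partial B_0(t)\cap\{x_n>0\}$, with velocity $e_1$ and outward unit normal $\nu(x)=-(x-te_1)/R_0$, so
\[
\frac{d}{dt}\lambda_1^-(t)\;=\;\frac{1}{R_0}\int_{\Gamma(t)}|\nabla u_t|^2\,(x_1-t)\,d\sigma,
\]
and the task reduces to showing this integral is strictly negative.

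Next I would introduce the reflection $S$ across the hyperplane $\Sigma:=\{x_1=t\}$ and split $\Gamma(t)=\Gamma_L\cup\Gamma_R$ according to $x_1\lessgtr t$. Because $B_0(t)$ and $\{x_n>0\}$ are $S$-invariant while $|Sy|^2=|y|^2-4t(y_1-t)<|y|^2<R_1^2$ for $y\in B_1$ with $y_1>t$, the reflection maps the right half $\Omega^+_R(t):=\Omega^+(t)\cap\{x_1>t\}$ into $\Omega^+(t)$. The reflected eigenfunction $u_t^*(y):=u_t(Sy)$ is therefore well-defined on $\Omega^+_R(t)$ and satisfies $-\Delta u_t^*=\lambda_1^-(t)\,u_t^*$ there. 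I would then analyse the boundary behaviour of $u_t^*-u_t$ on $\partial\Omega^+_R(t)$: the two functions agree on the flat cut $\Sigma\cap\Omega^+(t)$; both vanish on the $S$-invariant pieces $\Gamma_R$ and on the portion of the equatorial annulus lying in $\{x_1\ge t\}$ (since $S$ sends these into other components of $\partial\Omega^+(t)$); and $u_t^*>0=u_t$ on the outer right hemisphere $\partial B_1\cap\Omega^+_R(t)$, since for such $y$ the image $Sy$ lies strictly inside $\Omega^+(t)$ where $u_t>0$.

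Strict Dirichlet monotonicity applied to $\Omega^+_R(t)\subsetneq\Omega^+(t)$ yields $\lambda_1(\Omega^+_R(t))>\lambda_1^-(t)$, so $-\Delta-\lambda_1^-(t)$ satisfies the maximum principle on $\Omega^+_R(t)$; combined with the boundary analysis this forces $u_t^*-u_t\ge 0$, with strict positivity inside by the strong maximum principle. Hopf's lemma, applied on the smooth piece $\Gamma_R$ on which the nonnegative function $u_t^*-u_t$ vanishes, gives $\partial_\nu(u_t^*-u_t)<0$. Since $u_t=u_t^*=0$ on $\Gamma_R$ the gradients are purely normal, hence $|\nabla u_t(Sy)|^2>|\nabla u_t(y)|^2$ pointwise on $\Gamma_R$. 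Pairing $\Gamma_L$ with $\Gamma_R$ through the isometry $S$ turns the Hadamard formula into
\[
R_0\,\frac{d}{dt}\lambda_1^-(t)\;=\;\int_{\Gamma_R}\bigl[|\nabla u_t(y)|^2-|\nabla u_t(Sy)|^2\bigr](y_1-t)\,d\sigma(y),
\]
which is strictly negative because $y_1-t>0$ on $\Gamma_R$.

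The main technical subtlety I anticipate is the bookkeeping around the corner where $\Gamma(t)$ meets the equatorial hyperplane $\{x_n=0\}$ and the edge where it meets the cut $\Sigma$: $\Omega^+_R(t)$ is only Lipschitz, so the strong maximum principle and Hopf's lemma must be applied locally on smooth pieces. Since the boundary components $\Gamma_R$ and $\partial B_1\cap\Omega^+_R(t)$ are smooth in their interiors, this localisation is routine and yields the strict inequality needed above.
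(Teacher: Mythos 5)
Your proof is correct and follows essentially the same route as the paper's: Hadamard's variation formula reduces $\frac{\dd}{\dd t}\lambda_1^-(t)$ to a boundary integral over the moving inner hemisphere, and a reflection across $\{x_1=t\}$ combined with the maximum principle and Hopf's lemma shows the paired integrand is strictly negative. The only minor divergence is in establishing $u_t^*-u_t\ge 0$: you invoke the generalized maximum principle for $-\Delta-\lambda_1^-(t)$ via the strict monotonicity $\lambda_1(\Omega^+(t)\cap\{x_1>t\})>\lambda_1^-(t)$, whereas the paper argues by contradiction that the set $\{u_t-u_t^*>0\}$ together with its mirror image would give two disjoint nodal domains and force $\lambda_2(\Omega^+(t))\le\lambda_1(\Omega^+(t))$; the two devices are interchangeable here.
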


\begin{proof}
 %Let us first introduce the following notations:$B^+_1:=B_1\cap \{x_n>0\}$, $B_0^+(t):=B_0(t)\cap \{x_n>0\}$ and $\opt :=\ot \cap \{x_n>0\}$. The first eigenvalue of $\opt$ will be denoted $\lambda_1(\opt)$.

%If $u\in H^-$ is a first eigenfunction of $\Delta^-$, then $u$ vanishes on the hyperplane $\{x_n=0\}$. The restriction of $u$ to $\opt$ is an eigenfunction of the  Dirichlet Laplacian in $\opt$, which implies that $\lambda_1(\opt)\leq \lambda_1^- (t).$ On the other hand, a first Dirichlet eigenfunction of $\opt$ can be reflected antisymmetrically with respect to the hyperplane $\{x_n=0\}$ to give an eigenfunction of $\Delta^-$ in $\ot$. Therefore, $$\lambda_1^- (t) = \lambda_1(\opt).$$

Fix a $t$ in $(0, R_1-R_0)$ and let $u(t)$ be the eigenfunction associated with $\lambda^-_1(t)$, chosen to be positive in $\opt$ and to satisfy  
$$\int_{\opt} u(t)^2 dx =1.$$ 
The function $t\mapsto \lambda^-_1(t)=\lambda_1(\opt)$ is a differentiable function of $t$ (see \cite{GS, Re}) and its derivative is given by the following so-called Hadamard formula (see \cite{EI1, GS, Ha, Sc2}):
\begin{equation}\label{hadamard}%hadamard
\frac{\dd }{\dd t}\lambda^-_1(t)=\frac{\dd }{\dd t}\lambda_1(\opt)=\int_{\partial\Omega^+(t)}
\left|\frac{\partial u(t)}{\partial {\eta_t}}\right|^2 {\eta_t}\cdot{v} \ d\sigma, 
\end{equation}
where ${\eta_t}$ is the inward unit normal vector field of $\partial\Omega^+(t)$  and 
${v}$ denotes the restriction to $\partial\Omega^+(t)$ of the deformation vector field. In our case, this vector field has the form $\varphi(x)\frac{\partial}{\partial x_1}$, where $ \varphi$ is a smooth function that vanishes on $\partial B_1$ and coincides with $1$ along $\partial B_0(t)$. Now,
$\partial\Omega^+(t)=\left(\Omega(t)\cap\{x_n=0\}\right) \cup\Gamma^+_1\cup \Gamma^+_0(t)$, with 
\begin{center}
$\Gamma^+_1:=\partial B_1\cap \{x_n>0\} \ \; \mbox{and} \ \;\Gamma_0^+(t):=\partial B_0(t)\cap \{x_n>0\}$.  
\end{center} 
Since $v=0$ on $\Gamma^+_1$,  ${\eta_t}\cdot{v}=\frac{\partial}{\partial x_n}\cdot\frac{\partial}{\partial x_1}=0$ on $\Omega(t)\cap\{x_n=0\}$, and ${\eta_t}\cdot{v}=\frac{1}{R_0}(x_1-t)$ on $\Gamma^+_0(t)$, the formula \ref{hadamard} reduces to
\begin{equation}\label{hadamard1}
\frac{\dd }{\dd t}\lambda^-_1(t)= \frac{1}{R_0}\int_{\Gamma^+_0(t)}
\left|\frac{\partial u(t)}{\partial {\eta_t}}\right|^2 (x_1-t) \ d\sigma.
\end{equation}

The hyperplane $Z_t:=\{x_1=t\}$ divides $\opt$ in two parts ; we denote by $\Omega^+_s(t)=\opt\cap\{x_1>t\}$ the smallest one. The reflection of $\Omega^+_s(t)$ with respect to $Z_t$ is a proper subset of $\opt$. 
 We introduce the following function defined in $\Omega^+_s(t)$,
$$w(x)={u}(t)(x)- {u}(t)(x^*),$$ 
where $x^*$ stands for the reflection of $x$ with respect to $Z_t$. Since $u(t)$ vanishes on $\partial \opt$ and is positive inside $\opt$,   $w(x)\le0$ for all $x$ in $\partial \Omega^+_s(t)$ and, moreover, $w(x)<0$ for all $x$ in $\Gamma_1\cap\{x_1>t\}$. 
 Therefore, $w$ satisfies the following problem:
\[
\left\{
\begin{array}{rclll}
\Delta w &  =   & -\lambda_1(\opt)\, w &\text{in} &\Omega^+_s(t)\\
       w & \leq & 0                &\text{on} & \partial\Omega^+_s(t).
\end{array}
\right.
\]    
 The function $w$ must be nonpositive everywhere in $\Omega^+_s(t)$. Otherwise, the subdomain $V=\{x\in \Omega^+_s(t)\; ;\; w(x)>0\}$ would have the same first Dirichlet eigenvalue as $\Omega^+ (t)$, that is $\lambda_1(V)=\lambda_1(\Omega^+ (t))$. But, thanks to the reflection with respect to $Z_t$, $\Omega^+ (t)$  would contain  two disjoint copies of $V$ and, then, $\lambda_2(\Omega^+ (t))\le \lambda_1(V)$, which leads to a contradiction.
 
Therefore, $\Delta w \ge 0$ in $\Omega^+_s(t)$ and the maximal value of $w$ (i.e. zero) is achieved on the boundary. Therefore, $w$ achieves its maximum at every point of $\Gamma^+_{0,s}(t):=\Gamma^+_0(t)\cap\{x_1>t\}$, and, due to the Hopf maximum principle (see \cite[Theorem 7, ch.2]{PW}), the normal derivative of $w$ is negative at any point $x$  of $\Gamma^+_{0,s}(t)$, that is  
$$0\le\frac{\partial u(t)}{\partial {\eta_t}}(x)<\frac{\partial u(t)}{\partial {\eta_t}}(x^*).$$

Coming back to Hadamard's formula \ref{hadamard1}, we get (noticing that $x^*_1-t=-(x_1-t)$)
\begin{eqnarray*}
\frac{\dd }{\dd t}\lambda^-_1(t)&=&\frac 1 R_0\int_{\Gamma_0^+(t)}
\left|\frac{\partial {u}(t)}{\partial {\eta_t}}\right|^2 (x_1-t)\,\dd\sigma\\
&=&\frac 1 R_0 \int_{\Gamma^+_{0,s}(t)} \left(
\left|\frac{\partial {u}(t)}{\partial {\eta_t}}(x)\right|^2 
-\left|\frac{\partial {u}(t)}{\partial {\eta_t}}(x^*)\right|^2
\right)(x_1-t)\, \dd\sigma\\
&<& 0
\end{eqnarray*}
which completes the proof.
\end{proof}

 \begin{proof}[Proof of Theorem \ref{main}]
Applying equation (\ref{inf}), Proposition \ref{l1} and Lemma \ref{oo}, respectively, we get, for all $t\in(0,R_2-R_1)$, 
$$\ld(t)\leq \lambda_1^-(t)<\lambda_1^-(0) = \lambda_2(0).$$
\end{proof}

%\begin{remark} Similar result holds for other domains with two orthogonal axis of symmetry, if we have to place a hole on one of the axis, the position which maximizes $\ld$ is the point of intersection of the axis. \end{remark}

\section{Domains in the sphere and the hyperbolic space} 

We represent the standard sphere $\Sp^n=\{ x\in\R^{n+1} \ ;\ \sum_{i=0}^{n}x_i^2=1\}$ and the hyperbolic space $\Hyp^n=\{ x\in\R^{n+1}\ ;\ x_0>0\ \text{and} \ x_0^2-\sum_{i=1}^{n}x_i^2=1\}$ 
as hypersurfaces of the Euclidean space $(\R^{n+1}, \sum_{i=0}^{n}dx_i^2)$ and the Minkowski space $(\R^{n+1}, -dx_0^2+\sum_{i=1}^{n}dx_i^2)$, respectively, endowed with the induced Riemannian metrics.
In the sequel, we use the same letter $M$ to denote both the standard sphere and the hyperbolic space.

Let $R_0$ and $R_1$ be two real numbers such that $R_1>R_0>0$, and $R_1<\pi$ in the case of $\Sp^n$.  We denote by $B_1$ the open geodesic ball of radius $R_1$ centered at the point $P:=(1,0,\dots, 0)$ and, for all $t\in [0,R_1-R_0)$, by $B_0(t)$ the open geodesic ball of radius $ R_0$ centered at the point $C(t)=(\cos t,\sin t, 0,\dots, 0)\in \Sp^n$ (resp. $C(t)=(\cosh t,\sinh t, 0,\dots, 0)\in \Hyp^n$) of the geodesic ray defined as intersection with $M$ of the $(x_0,x_1)$-plane. We set $\Omega(t):=B_1\setminus \bar B_0(t)$ and denote by $$\lambda_1(t)  <  \lambda_2(t) \le \lambda_3(t)\le 
 \cdots \le \lambda_i(t)\le \cdots$$
the spectrum of the Laplace-Beltrami operator $\Delta$ with Dirichlet boundary condition on $\Omega(t)$. 
 
Again, for symmetry reasons, we only need to prove that, for all $t\in (0,R_1-R_0)$,
$$\lambda_2(t) < \lambda_2(0).$$
The proof follows the same steps as in the Euclidean case. 
 
The domain $\Omega(t)$ is invariant under the reflection, again denoted by $S$, with respect to the hyperplane $\{x_{n}=0\}$, which is an isometry of $M$.  As before, the spectrum of $\Delta$ is the re-ordered union of two spectra, $\{\lambda_i^+(t)\}_{i\geq 1}$ and $\{\lambda_i^-(t)\}_{i\geq 1}$, corresponding to invariant and anti-invariant eigenfunctions.  

In the case $t=0$, the domain $\Omega (0)$ can be parametrized by
$X:(r,\sigma)\in(R_0,R_1)\times \Sp^{n-1}\mapsto (\cos r, \sin r \ \sigma)\in \Sp^n$ in the spherical case, and $X:(r,\sigma)\in(R_0,R_1)\times \Sp^{n-1}\mapsto (\cosh r, \sinh r\ \sigma)\in \Hyp^n$ in the hyperbolic case. 
\begin{lemma}\label{l2}
Let $\mu$ be the first eigenvalue and $f$ the first eigenfunction (unique up to scaling) of the following Sturm-Liouville eigenvalue problem:
	\begin{displaymath}
\left\{\begin{array}{rcll}
f''(r) +(n-1)\frac{a'(r)}{a(r)} f'(r) -\frac{n-1}{a^2(r)} f(r) =-\mu f(r)\\
f(R_0)=f(R_1)=0,
\end{array}\right.
\end{displaymath}
with $a(r)=\sin r$ in the case of $\Sp^n$ and $a(r)=\sinh r$ in the case of $\Hyp^n$.
Then the second eigenspace of $\Omega(0)$ consists of functions $u$ of the form 
$$u(X(r,\sigma))= f(r) L(\sigma),$$
where $L$ is a linear function on $\Sp^{n-1}\subset\R^n$. In particular,  
$$\lambda_2(0)=\lambda^+_2(0)=\lambda^-_1(0)=\mu.$$
\end{lemma}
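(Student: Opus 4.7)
The plan is to follow the proof of Lemma \ref{oo} almost line by line, the only novelty being the warped-product structure of the ambient metric. In geodesic polar coordinates $(r,\sigma)\in(R_0,R_1)\times\Sp^{n-1}$ around $P$, the metric of $M$ is $dr^2+a(r)^2 g_{\Sp^{n-1}}$, so
\[
\Delta=\frac{\partial^2}{\partial r^2}+(n-1)\frac{a'(r)}{a(r)}\frac{\partial}{\partial r}+\frac{1}{a^2(r)}\Delta_{\Sp^{n-1}}.
\]
Separation of variables then writes every eigenfunction on $\Omega(0)$ as a linear combination of products $f_k(r)g_k(\sigma)$, where $g_k$ lies in the eigenspace of $\Delta_{\Sp^{n-1}}$ with eigenvalue $\gamma_k=k(n+k-2)$ and $f_k$ solves the Sturm--Liouville problem $(P_k)$ obtained by replacing $n-1$ with $\gamma_k$ in the radial equation of the statement. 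Writing $\mu_l(k)$ for the $l$-th eigenvalue of $(P_k)$ and invoking Courant's nodal domain theorem, a second eigenfunction of $\Omega(0)$ must correspond to $(k,l)=(0,2)$ or $(k,l)=(1,1)$, so $\lambda_2(0)=\min\{\mu_2(0),\mu_1(1)\}$.

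The heart of the argument is the strict inequality $\mu_2(0)>\mu_1(1)$, which I would establish by the same derivative trick used in the Euclidean proof. If $f_0$ is a second eigenfunction of $(P_0)$ with unique interior zero $r_0$, then $h:=f_0'$ has zeros $r_1\in(R_0,r_0)$ and $r_2\in(r_0,R_1)$ and is sign-definite on $(r_1,r_2)$. Differentiating $(P_0)$ (where $\gamma_0=0$) and rearranging yields
\[
h''(r)+(n-1)\frac{a'(r)}{a(r)}h'(r)+(n-1)\left(\frac{a'(r)}{a(r)}\right)'h(r)+\mu_2(0)h(r)=0.
\]
The elementary identity $(a'/a)'=-1/a^2$, valid for both $a=\sin r$ and $a=\sinh r$ (and, incidentally, for $a=r$ in the Euclidean case), then recasts this equation as $(P_1)$ with spectral parameter $\mu_2(0)$ on the strictly smaller interval $(r_1,r_2)\subsetneq(R_0,R_1)$. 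Strict domain monotonicity of the first Dirichlet eigenvalue of a regular Sturm--Liouville problem immediately gives $\mu_2(0)>\mu_1(1)$, and hence $\lambda_2(0)=\mu_1(1)=\mu$.

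To conclude, the second eigenspace must come from the case $(k,l)=(1,1)$, so it consists exactly of the functions $f(r)L(\sigma)$ with $L$ linear on $\Sp^{n-1}\subset\R^n$. The choice $L(\sigma)=\sigma_n$ yields an eigenfunction lying in $H^-$, whereas $L(\sigma)=\sigma_i$ with $i<n$ yields a sign-changing eigenfunction in $H^+$; combining this with the identity $\lambda_2(0)=\min\{\lambda_2^+(0),\lambda_1^-(0)\}$, which holds on $\Sp^n$ and $\Hyp^n$ for exactly the same reason as on $\R^n$, forces the equalities $\lambda_2^+(0)=\lambda_1^-(0)=\mu$.

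The main obstacle is essentially concentrated in a single line: the identity $(a'/a)'=-1/a^2$. It is precisely what makes the radial derivative of a second-mode radial function automatically satisfy the first-mode equation, and hence what allows the uniform transposition of the Euclidean argument to both space forms with no additional work beyond bookkeeping.
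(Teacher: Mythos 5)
Your proposal is correct and follows exactly the route the paper takes: its proof of Lemma \ref{l2} simply says ``separating the variables and using exactly the same arguments as in the proof of Lemma \ref{oo}'', and you have carried out precisely that transposition. The one thing you add is to make explicit the identity $\left(a'/a\right)'=-1/a^{2}$ (valid for $a=r$, $\sin r$, $\sinh r$ alike), which is indeed the reason the derivative trick $h=f_0'$ turns a second eigenfunction of $(P_0)$ into a solution of the $(P_1)$ equation on a strictly smaller interval in all three geometries.
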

For a point $x= X(r,\sigma)\in \Omega(0)$, $r$ represents the distance from $x$ to $P$ and $\sigma$ is the projection to the hyperplane $\{ x_0=0\}$ of $  x /{a(r)}$. Thus,  the $n$ functions $$u_1(x)=\frac{f(r(x))}{a (r(x))}\ x_1, \dots , u_n(x)=\frac{f(r(x))}{a (r(x))}\ x_n$$ 
constitute a basis of the second eigenspace of $\Omega(0)$. These functions are all invariant by $S$ except the last one which is anti-invariant.

%Then the second eigenspace of $\Omega(0)$ consists of functions $u$ of the form $$u(x)= \frac{f(r(x))}{\sin r(x)}\ x\cdot v,$$ where $v\in\{0\}\times \R^n$. In particular, $$\lambda_2(0)=\lambda^+_2(0)=\lambda^-_1(0)=\mu.$$

\begin{proof}[Proof of Lemma \ref{l2}] The Riemannian metric of $\Omega(0)$ is given in the $(r,\sigma)$-coordinates by
$g=dr^2+ a^2(r) g_{S^{n-1}}$, where $g_{S^{n-1}}$ is the standard metric of $\Sp^{n-1}$. The   
expression of the Laplace-Beltrami operator with respect to these coordinates is 
\begin{equation*}
\Delta = \frac{\partial^2 }{\partial r^2} +(n-1)\frac{a'(r)}{a(r)}\frac {\partial }{\partial r} +\frac 1{a^2(r)} \Delta_{S^{n-1}}.
\end{equation*}
Separating the variables and using exactly the same arguments as in the proof of Lemma \ref{oo}, we get the result.
\end{proof}
We introduce 
$\opt=\Omega(t)\cap\{x_n>0\}$ and $\lambda_1(\opt)$ as in Section 2, and check that we still have 
 the simplicity of $\lambda^-_1(t)$ with
$\lambda_1^- (t) = \lambda_1(\opt).$
The proof of Theorem \ref{space forms} will be complete after the following 
\begin{proposition}\label{l3}
The function $t\mapsto \lambda^-_1(t)$ is (strictly) decreasing on $(0, R_1-R_0)$. 
\end{proposition}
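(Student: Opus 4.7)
The plan is to repeat the proof of Proposition \ref{l1} with two replacements adapted to the ambient manifold $M$. The translation field $\partial/\partial x_1$ generating the Euclidean sliding is replaced by the Killing field $K$ of $M$ generating the one-parameter group of isometries that translates $C(t)$ along the geodesic through $P$. The hyperplane $\{x_1=t\}$ perpendicular to the motion at $C(t)$ is replaced by the totally geodesic hypersurface $Z_t\subset M$ through $C(t)$ perpendicular to the same geodesic; let $r_t$ denote the reflection of $M$ across $Z_t$. Since $M$ has constant curvature, $Z_t$ coincides with the equidistant locus between $P$ and $r_t(P)$, and $r_t$ is an isometry of $M$ fixing $C(t)$ and preserving both $\partial B_0(t)$ and the symmetry hyperplane $\{x_n=0\}$.

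First, I would differentiate $\lambda_1^-(t)=\lambda_1(\opt)$ by Hadamard's formula, applied to a deformation vector field that equals $K$ on a neighborhood of $\partial B_0(t)$ and vanishes near $\partial B_1$. Since $K$ is tangent to $\{x_n=0\}$ while $\eta_t$ is normal to it on $\ot\cap\{x_n=0\}$, the boundary integral reduces to
\[
\frac{\dd}{\dd t}\lambda_1^-(t)=\int_{\Gamma_0^+(t)}\left|\frac{\partial u(t)}{\partial \eta_t}\right|^2\eta_t\cdot K\,\dd\sigma.
\]
The function $\eta_t\cdot K$ vanishes on $Z_t\cap\Gamma_0^+(t)$, is strictly positive on the far side of $Z_t$ (away from $P$) and strictly negative on the near side, and the involution $r_t$ is measure-preserving on $\Gamma_0^+(t)$, swaps the two sides, and sends $\eta_t\cdot K$ to its opposite.

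Second, let $\Omega_s^+(t)$ denote the far-side portion of $\opt$. The inclusion $r_t(\Omega_s^+(t))\subset\opt$, as a proper subset, follows from the fact that any point on the near side of $Z_t$ is strictly closer to $P$ than to $r_t(P)$, hence the reflection of the far half of $B_1$ lies strictly inside $B_1$. Setting $w(x):=u(t)(x)-u(t)(r_t(x))$ on $\Omega_s^+(t)$, we have $\Delta w=-\lambda_1(\opt)w$ with $w\le 0$ on $\partial\Omega_s^+(t)$ and strict negativity on $\partial B_1\cap\partial\Omega_s^+(t)$. The Courant nodal-domain contradiction of Proposition \ref{l1} excludes a component of $\{w>0\}$, so $w\le 0$ throughout, and Hopf's lemma then yields
\[
0\le\frac{\partial u(t)}{\partial\eta_t}(x)<\frac{\partial u(t)}{\partial\eta_t}(r_t(x))
\]
at each $x\in\Gamma_0^+(t)$ on the far side. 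Pairing the far and near sides in the Hadamard integral via $r_t$, together with the sign reversal of $\eta_t\cdot K$, yields $\dd\lambda_1^-/\dd t<0$.

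The main obstacle, compared with the Euclidean case, is verifying the constant-curvature geometric input: that $Z_t$ is totally geodesic and coincides with the perpendicular bisector of the geodesic segment from $P$ to $r_t(P)$; that $r_t$ extends to an ambient isometry preserving $B_0(t)$ and $\{x_n=0\}$; and that $\eta_t\cdot K$ has the claimed sign behavior on $\Gamma_0^+(t)$. Once these ingredients are in place, using that $K$ is a Killing field perpendicular to $Z_t$ at $C(t)$, the reflection/maximum-principle argument of Proposition \ref{l1} carries over without modification.
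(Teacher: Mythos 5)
Your proposal is correct and takes essentially the same route as the paper: the authors also deform by a Killing field $v=\varphi V$ in Hadamard's formula and reflect across the totally geodesic hypersurface $Z_t=\{x\cdot V(C(t))=0\}$ orthogonal to the geodesic at $C(t)$, then rerun the maximum-principle argument of Proposition \ref{l1}. The geometric verifications you flag at the end (sign of $\eta_t\cdot v$, the reflection being an isometry sending the far half properly into $\Omega^+(t)$) are exactly what the paper carries out, by explicit coordinate computation with the Euclidean, respectively Minkowski, inner product.
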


\begin{proof}
Hadamard's variation formula remains valid for domains in a general Riemannian manifold (see \cite{EI1}) and gives:
\begin{equation}\label{elsoufi-ilias}%hadamard
\frac{\dd }{\dd t}\lambda^-_1(t)=\frac{\dd }{\dd t}\lambda_1(\opt)=\int_{\partial\Omega^+(t)}
\left|\frac{\partial u(t)}{\partial {\eta_t}}\right|^2 {\eta_t}\cdot{v} \ d\sigma, 
\end{equation}
where $u(t)$, ${\eta_t}$ and ${v}$ are the first eigenfunction (satisfying $\int_{\opt} u(t)^2  =1$ and  $u(t)>0$ in $\opt$),  the inward unit normal vector field and the deformation vector field on $ \partial\Omega^+(t)$, respectively.\\

\emph{Case of $\Sp^n$}: Let $V(x)=(-x_1, x_0, 0, \dots,0)$ be the Killing vector field of $\Sp^n$ generating rotations in the $(x_0, x_1)$-plane.  The motion of  $B_0(t)$ inside $B_1$ along the geodesic ray $C(t)=(\cos t, \sin t, 0, \dots, 0)$ is generated by a vector field of the form   
$v=\varphi(x)V$, where $ \varphi$ is a smooth function that vanishes on $\partial B_1$ and coincides with $1$ along $\partial B_0(t)$. 
Notice that $v$ is tangent to the geodesic ray $C(t)$. On the other hand,  the unit normal vector $\eta_t(x)$ to $\partial B_0(t)$ at $x$ is nothing but the normalized orthogonal projection of the vector $- C(t)$ to the tangent space $ T_x\Sp^n$, that is,  
$$ \eta_t(x)= -\frac {C(t)-(x\cdot C(t))x}{\sqrt{1-(x\cdot C(t))^2}}.$$
Thus, $\forall x\in  \Gamma^+_0(t):=\partial B_0(t)\cap\{x_n>0\}$,
\begin{eqnarray*}
{\eta_t}\cdot{v}(x)=-\frac {C(t)\cdot V(x)}{\sqrt{1-(x\cdot C(t))^2}}&=& \frac {V(C(t))\cdot x}{\sqrt{1-(x\cdot C(t))^2}}\\
&=&
\frac {x_1 \cos t - x_0 \sin t} {\sqrt{1-(x_0 \cos t + x_1 \sin t)^2} }, 
\end{eqnarray*}
and ${\eta_t}\cdot{v}$ vanishes at any other point of the boundary of $\opt$. Therefore, the formula (\ref{elsoufi-ilias}) reduces to
\begin{equation}\label{elsoufi-ilias1}
\frac{\dd }{\dd t}\lambda^-_1(t)= \int_{\Gamma^+_0(t)}
\left|\frac{\partial u(t)}{\partial {\eta_t}}\right|^2  {\eta_t}\cdot{v}(x)\ d\sigma.
\end{equation}
Consider the hyperplane $Z_t:=\{x\cdot V(t) =  0\}$, with $V(t):=V(C(t))$, and let $\Omega^+_{s}(t)=\opt\cap\{x\cdot V(t)>0\}$ and $\Gamma^+_{0,s}(t)=\Gamma^+_{0}(t)\cap\{x\cdot V(t)>0\}$. The reflection $x^*$ of a point $x$ with respect to $Z_t$ is given by $x^*=x-2 (x\cdot V(t)) \, V(t)$. One can easily check that the image of $\Omega^+_{s}(t)$ by this reflection is a proper subset of $\opt$ and that, $\forall x\in\Gamma^+_0(t)$,  
$${\eta_t}\cdot{v}(x^*)=-{\eta_t}\cdot{v}(x)=\frac {x\cdot V(t)}{\sqrt{1-(x\cdot C(t))^2}}.$$
Thus,
\begin{eqnarray*}
\frac{\dd }{\dd t}\lambda^-_1(t)= \int_{\Gamma^+_{0,s}(t)} \left(
\left|\frac{\partial {u}(t)}{\partial {\eta_t}}(x)\right|^2 
-\left|\frac{\partial {u}(t)}{\partial {\eta_t}}(x^*)\right|^2
\right)\frac {x\cdot V(t)}{\sqrt{1-(x\cdot C(t))^2}} \dd\sigma.\\
\end{eqnarray*}
The same argument used in the proof of Proposition \ref{l1} enables us to show that, at any point $ x\in \Gamma^+_{0,s}(t)$,
$$0\le\frac{\partial u(t)}{\partial {\eta_t}}(x)<\frac{\partial u(t)}{\partial {\eta_t}}(x^*),$$
and, then, $\frac{\dd }{\dd t}\lambda^-_1(t)<0$.\\

\emph{Case of $\Hyp^n$}: The proof is the same as for $\Sp^n$. All arguments and formulas above remain true in the hyperbolic setting with $V(x)=(x_1, x_0, 0,\dots,0)$, $C(t)=(\cosh t, \sinh t, 0,\cdots,0)$, and provided the Euclidean inner product is replaced by the bilinear form $x.y=-x_0 y_0+\sum_{i=1}^n x_i y_i$. 

\end{proof}
%%%%%%%%%%%%%%%%%%%%%%%%%%%%%%%%%%%%%%%%%%%

\section*{Acknowledgements} 

The authors wish to
thank Evans Harrell and Bernard Helffer for helpful discussions.

%%%%%%%%%%%%%%%%%%%%%%%%%%%%%%%%%%%%%%%%%%%%%%%%%%%%%%%%%%%%%%%%%%%%%%%%%%%%%%%%%%%%%%%%%%%%%%%%%%%%%%%%%%


\begin{thebibliography}{99}
     
\bibitem{AA}
\newblock M.H.C. Anisa  and A.R. Aithal,
\newblock \emph{On two functionals connected to the {L}aplacian in a class of doubly  connected domains in space-forms},
\newblock  Proc. Indian Acad. Sci. Math. Sci., \textbf{115} (2005) 93--102.

\bibitem{A2}
\newblock M.S. Ashbaugh,
\newblock \emph{Isoperimetric and universal inequalities for eigenvalues},
\newblock in `` Spectral theory and geometry (Edinburgh, 1998)", London Math. Soc. Lecture Note Ser., Cambridge Univ. Press, Cambridge \textbf{273} (1999) 95--139.

\bibitem{A1}
\newblock M.S. Ashbaugh,
\newblock \emph{Open problems on eigenvalues of the {L}aplacian},
\newblock in ``Analytic and geometric inequalities and applications", Math. Appl., Kluwer Acad. Publ., Dordrecht, \textbf{478} (1999) 13--28.

\bibitem{B}
\newblock C. Bandle,
\newblock ``Isoperimetric inequalities and application,"  
volume~7 of {\em
  Monographs and Studies in Mathematics}.
\newblock Pitman (Advanced Publishing Program), Boston, Mass., 1980.

\bibitem{EI1}
\newblock A. El Soufi and S. Ilias,
\newblock Domain deformations and eigenvalues of the {D}irichlet {L}aplacian in a {R}iemannian manifold.
\newblock Illinois J.  Math. \textbf{51} (2007) 645--666.


\bibitem{EK1}
\newblock A. El Soufi and R. Kiwan,
\newblock \emph{Extremal first {D}irichlet eigenvalue of doubly connected plane
  domains and dihedral symmetry},
\newblock SIAM J. Math. Anal.,  \textbf{39} (2007) 1112--1119.


\bibitem{F}
\newblock G. Faber,
\newblock \emph{Beweis, dass unter allen homogenen membranen von gleicher fl\"{a}che
  und gleicher spannung die kreisf\"{o}rmige den tiefsten grundton gibt},
\newblock  Sitz. Ber. Bayer. Akad. Wiss., (1923) pages 169--172.

\bibitem{GS}
\newblock P. R. Garabedian and M. Schiffer,
\newblock \emph{Convexity of domain functionals},
\newblock  J. Analyse Math., \textbf{2} (1953) 281--368.

\bibitem{Ha}
\newblock J. Hadamard
\newblock `` M\'emoire sur le probl\`eme d'analyse relatif \`a l'\'equilibre
  des plaques \'elastiques encastr\'ees", {\OE}uvres de {J}. {H}adamard. {T}ome
  {II}.
\newblock  \'Editions CNRS, Paris, 1968, pages 515--631.

\bibitem{HKK}
\newblock E.M. Harrell, P. Kr{\"o}ger  and K. Kurata,
\newblock \emph{On the placement of an obstacle or a well so as to optimize the
  fundamental eigenvalue},
\newblock  SIAM J. Math. Anal., \textbf{33}(2001) 240--259 (electronic).

\bibitem{He}
\newblock A. Henrot,
\newblock \emph{Minimization problems for eigenvalues of the {L}aplacian},
\newblock  J. Evol. Equ., \textbf{3}(2003) 443--461.
\newblock Dedicated to Philippe B\'enilan.

\bibitem{H2}
\newblock J. Hersch,
\newblock \emph{The method of interior parallels applied to polygonal or multiply
  connected membranes},
\newblock  Pacific J. Math., \textbf{13} (1963) 1229--1238.

\bibitem{K}
\newblock S. Kesavan,
\newblock \emph{On two functionals connected to the {L}aplacian in a class of doubly
  connected domains},
\newblock Proc. Roy. Soc. Edinburgh Sect. A, \textbf{133}(2003) 617--624.

\bibitem{Kr}
\newblock E. Krahn,
\newblock \emph{\"{U}ber eine von {R}ayleigh formulierte {M}inimaleigenschaft des
  {K}reises},
\newblock  Math. Ann., \textbf{94}(1925) 97--100.


\bibitem{P}
\newblock L.E. Payne,
\newblock \emph{Isoperimetric inequalities and their applications},
\newblock  SIAM Rev., \textbf{9} (1967) 453--488.

\bibitem{PS}
\newblock G. P{\'o}lya  and G. Szeg{\"o},
\newblock ``Isoperimetric {I}nequalities in {M}athematical {P}hysics",
\newblock Annals of Mathematics Studies, no. 27. Princeton University Press,
  Princeton, N. J., 1951.

\bibitem{PW}
\newblock M.H. Protter and H.F. Weinberger,
\newblock ``Maximum principles in differential equations",
\newblock Springer-Verlag, New York, 1984.
\newblock Corrected reprint of the 1967 original.

\bibitem{Re}
\newblock F. Rellich,
\newblock New results in the perturbation theory of eigenvalue problems.
\newblock in ``Simultaneous linear equations and the determination of
  eigenvalues", National Bureau of Standards Applied Mathematics Series, U. S. Government Printing Office, Washington, D. C., \textbf{29} (1953) 95--99.

\bibitem{Sc2}
\newblock M. Schiffer,
\newblock \emph{Hadamard's formula and variation of domain-functions},
\newblock Amer. J. Math., \textbf{68} (1946) 417--448.

\bibitem{SS}
\newblock C.L. Shen and C.T. Shieh,
\newblock \emph{Some properties of the first eigenvalue of the {L}aplace operator on
  the spherical bands in {$S\sp 2$}},
\newblock SIAM J. Math. Anal., \textbf{23}(1992)1305--1308.

\bibitem{shi}
\newblock C.T. Shieh,
\newblock \emph{On the second eigenvalue of the {L}aplace operator on a spherical
  band},
\newblock Proc. Amer. Math. Soc., \textbf{132}(2004) 157--164 (electronic).

\end{thebibliography}
\end{document}